\newtheorem{theorem}{Theorem}[section]
\newtheorem{proposition}[theorem]{Proposition}
\newtheorem{lemma}[theorem]{Lemma}
\newcommand{\G}{\Gamma}
\DeclareMathOperator{\tr}{tr}
\title{Laplacian spectral characterization of roses}
\author{Changxiang He}
\address{College of Science, University of Shanghai for Science
and Technology, China}
\email{changxiang-he@163.com}
\author{Edwin R. van Dam}
\address{Department of Econometrics and O.R., Tilburg University,
	 The Netherlands}
\email{Edwin.vanDam@uvt.nl}
\date{}
\begin{document}

\subjclass[2010]{05C50}

\keywords{rose graphs, Laplacian spectrum, closed walks, Sachs' theorem, matchings \\ This version will appear in Linear Algebra and its Applications, \url{https://doi.org/10.1016/j.laa.2017.08.012}}

\begin{abstract}
A rose graph is a graph consisting of cycles that all meet in one vertex. We show that except for two specific examples, these rose graphs are determined by the Laplacian spectrum, thus proving a conjecture posed by Lui and Huang [F.J. Liu and Q.X. Huang, Laplacian spectral characterization of $3$-rose graphs, {\sl Linear Algebra Appl.} 439 (2013), 2914--2920]. We also show that if two rose graphs have a so-called universal Laplacian matrix with the same spectrum, then they must be isomorphic. In memory of Horst Sachs (1927-2016), we show the specific case of the latter result for the adjacency matrix by using Sachs' theorem and a new result on the number of matchings in the disjoint union of paths.
\end{abstract}

\maketitle

\section{Introduction}

For $k \geq 2$, a $k$-rose graph is a graph with $k$ cycles that all meet in one vertex, which we call the central vertex. Alternatively, it is a connected graph with one vertex of degree $2k$, the central vertex, and for which all other vertices have degree $2$. We will show that except for two specific examples, these rose graphs are determined by the Laplacian spectrum. This proves a conjecture posed by Liu and Huang \cite{F. J. Liu}, who also showed that all $3$-rose graphs are indeed determined by the Laplacian spectrum. Also rose graphs in which each cycle is a triangle are determined by the Laplacian spectrum, by Liu, Zhang, and Gui \cite{X. Liu}; such graphs are better known as friendship graphs. The result for triangle-free $2$-rose graphs (also known as $\infty$-graphs) was proven by Wang, Huang, Belardo, and Li Marzi \cite{J. F. Wang}. Not all $2$-rose graphs are however determined by the Laplacian spectrum. One counterexample is given in Figure \ref{fig:cosp34}; it already appeared in the thesis of one of the authors \cite{EvDthesis} as an example of a pair of cospectral graphs of which one is bipartite and the other is not. Another (and final) counterexample is given in Figure \ref{fig:cosp35}.

\begin{figure}[!htb]
\begin{center}
\includegraphics[width=12cm]{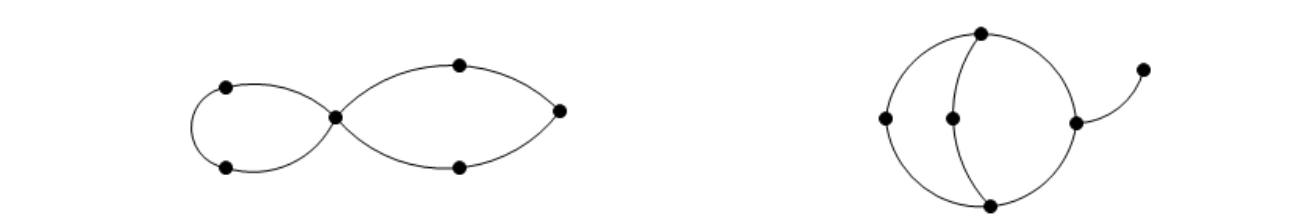}
\makeatletter\def\@captype{figure}\makeatother \caption{The rose graph $R(3,4)$ and the other graph with Laplacian spectrum $\{0,\ 3-\sqrt{5},\ 2,\ 3,\ 3,\ 3+\sqrt{5}\}$.}\label{fig:cosp34}
\end{center}
\end{figure}
\begin{figure}[!htb]
\begin{center}
\includegraphics[width=12cm]{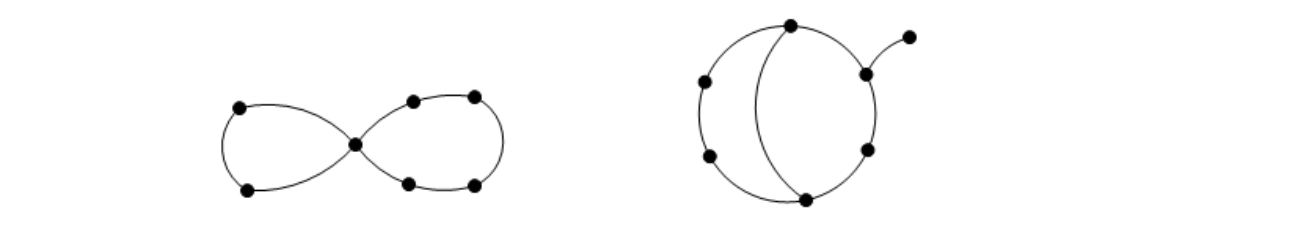}
\makeatletter\def\@captype{figure}\makeatother \caption{The rose graph $R(3,5)$ and the other graph with Laplacian spectrum $\{0,0.6086...,\frac{\ 5-\sqrt{5}}{2},2.227...,3,\frac{\ 5+\sqrt{5}}{2},5.164...\}$. }\label{fig:cosp35}
\end{center}
\end{figure}

The problem of which rose graphs are determined by the Laplacian spectrum is part of a more general problem of which graphs are determined by the spectrum. For an overview and motivation of this problem, we refer the reader to the survey papers by Van Dam and Haemers \cite{DH03,DH09}.
The analogous problem of determining which $k$-rose graphs are determined by the {\em signless Laplacian} spectrum has recently been solved by Liu, Shan, and Das (private communication; for the particular cases $k=2,3,$ and $4$, see \cite{J. F. Wang}, \cite{wang3}, and \cite{MaHuang}, respectively).

This paper is built up as follows. In Section \ref{sec:pre}, we will give some preliminary results that we will use in the final two sections. We note that except for Lemma \ref{laplacedeterminesnumbers} and Sachs' theorem, which are both well-known, our paper is mostly self-contained. In Section \ref{sec:universal}, we will introduce a universal Laplacian matrix; this is a class of matrices that includes well-studied matrices such as the adjacency matrix, the Laplacian matrix, and the signless Laplacian matrix. We show in Proposition \ref{genLap} that two rose graphs with the same universal Laplacian spectrum must be isomorphic. This is an important step towards the main result. In Section \ref{sec:Sachs}, which is in memory of Horst Sachs (1927-2016), we sketch an alternative proof of the result in Proposition \ref{genLap} for the adjacency matrix by using Sachs' theorem \cite{Sachs} and a lemma on the number of matchings in a disjoint union of paths. In Section \ref{sec:case3+}, we derive the Laplacian spectral characterization of $k$-rose graphs for $k \geq 3$. The case $k=2$, which is more complicated, will be discussed in the final section. To summarize, we prove the following, where
$R(3,\ell)$ denotes the $2$-rose graph with one triangle and one cycle of length $\ell$, with $\ell \geq 3$.

\begin{theorem}\label{thm:general} Let $k \geq 2$. All $k$-rose graphs, except for $R(3,4)$ and $R(3,5)$, are determined by the Laplacian spectrum. Both $R(3,4)$ and $R(3,5)$ have one Laplacian cospectral mate.
\end{theorem}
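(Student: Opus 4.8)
The plan is to show that any graph $H$ that is Laplacian cospectral with a $k$-rose graph $G$ must itself be a $k$-rose graph (outside the two exceptional cases), at which point Proposition \ref{genLap}, applied to the ordinary Laplacian, immediately yields $H \cong G$. So the entire burden is to recover the rose structure of $H$ from spectral data. By Lemma \ref{laplacedeterminesnumbers} and the standard identities $\sum_i \mu_i = \tr L = \sum_i d_i = 2m$ and $\sum_i \mu_i^2 = \tr L^2 = \sum_i d_i^2 + 2m$, the mate $H$ shares with $G$ the number of vertices $n$, the number of edges $m$, the number of components (so $H$ is connected, the multiplicity of $0$ being $1$), the number of spanning trees $\tau = \prod_i \ell_i$, and the quantity $\sum_i d_i^2$. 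Since $G$ is a $k$-rose graph we have $m = n + k - 1$, so $H$ is connected with cyclomatic number $k$.

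First I would reduce everything to the degree sequence. Writing $e_i = d_i - 2$, cospectrality gives the two identities $\sum_i e_i = 2(k-1)$ and $\sum_i e_i^2 = 4(k-1)^2$. If $H$ has minimum degree at least $2$, then every $e_i \geq 0$, and since $\sum_i e_i^2 \le (\sum_i e_i)^2$ with equality only when at most one $e_i$ is positive, we are forced into exactly one vertex of degree $2k$ with all others of degree $2$. A connected graph with this degree sequence is necessarily a $k$-rose graph: deleting the unique vertex $v$ of degree $2k$ leaves only path components (a cycle component would be a component of $H$ disjoint from $v$), and each path closes up through $v$ into one of $k$ cycles. So in the minimum-degree-$2$ case we finish via Proposition \ref{genLap}, and the whole difficulty is concentrated in ruling out pendant vertices.

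Next I would quantify the leaves. Because each excess $d_i - 2 \geq 1$ contributes at least as much to $\sum_i e_i^2$ as to $\sum_i e_i$, the two identities combine to bound the number $n_1$ of degree-$1$ vertices by $(k-1)(2k-3)$ and, more usefully, to force any non-rose mate to have maximum degree at most $2k-1$. For $k = 2$ this pins the shape tightly: at most one pendant vertex, and after deleting the pendant path the $2$-core is a connected graph of cyclomatic number $2$ with exactly two vertices of degree $3$, that is, a theta graph or a dumbbell, with a single pendant path attached at a branch vertex. The hard point, however, is that the low-order moments do not by themselves exclude leaves: already for $k = 3$ the degree sequence with excesses $(3,2,1,-1,-1)$ and all remaining degrees $2$ satisfies both identities exactly, yet comes from no rose graph. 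Eliminating such phantom sequences therefore demands finer invariants.

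To carry out the elimination I would bring in the next spectral moments together with the spanning-tree count. The identity $\tr L^3 = \sum_i d_i^3 + 3\sum_i d_i^2 - 6t$ converts the known value of $\sum_i \mu_i^3$ into a linear relation between the third degree moment and the number of triangles $t$, and the analogous expansion of $\tr L^4$ ties $\sum_i d_i^4$ to the numbers of quadrilaterals and of paths of length two; combined with $\tau = \prod_i \ell_i$, I expect these to be jointly incompatible with every leaf-containing degree sequence once $k \geq 3$, forcing $H$ to be a rose graph and closing that range through Proposition \ref{genLap}. The genuinely delicate case is $k = 2$, where the theta/dumbbell-plus-pendant-path family survives all the moment tests and one must compare Laplacian characteristic polynomials of these graphs with those of the $2$-rose graphs $R(a,b)$ to show that cospectrality forces $(a,b) \in \{(3,4),(3,5)\}$ and that in each case the non-rose mate is unique. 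I expect the main obstacle to be exactly this leaf elimination at the point where the spectral invariants cease to be decisive: proving that the finer invariants really do kill all phantom sequences for $k \geq 3$, and, for $k = 2$, isolating precisely the two surviving pairs and establishing uniqueness of their mates, where an explicit polynomial computation seems unavoidable.
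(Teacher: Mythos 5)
Your overall strategy is the same as the paper's: use the first two Laplacian moments to constrain the degree sequence of a cospectral mate, show the mate must itself be a $k$-rose graph, and then finish with Proposition \ref{genLap}. Your handling of the minimum-degree-$2$ case is correct, and your observation that a non-rose mate has maximum degree at most $2k-1$ is exactly the paper's Lemma \ref{maxdegree}. But the two decisive steps are left as expectations rather than proofs, and that is where essentially all of the remaining content lies. For $k\geq 3$ you only say you ``expect'' the third and fourth moments plus spanning trees to be jointly incompatible with leaf-containing degree sequences. The paper closes this case with a concrete inequality that your sketch never identifies: since all degrees of a non-rose mate are at most $2k-1$, one has $(d_i-2)^3\leq (2k-3)(d_i-2)^2$ pointwise, hence by \eqref{sumsdegrees}, $\sum_i (d_i-2)^3\leq (2k-3)(2k-2)^2$, which is strictly less than $(2k-2)^3-6k$ for $k>3$ (the case $k=3$ needs the refinement in Lemma \ref{inequality}, since $(2k-3)(2k-2)^2=48>46$ there). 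Feeding this into the third-moment identity $\sum_i(d_i-2)^3-6t=(2k-2)^3-6t'$ forces $t'>t+k\geq k$, contradicting the trivial fact that a $k$-rose graph has at most $k$ triangles. No fourth moment or spanning-tree count is needed. Indeed, your own phantom sequence with excesses $(3,2,1,-1,-1)$ dies this way: there $\sum_i(d_i-2)^3=34$, so $t'=t+5>3$. The key ingredient you are missing is the combination of the max-degree bound with the bound $t'\leq k$; without it (or a substitute), the whole range $k\geq 3$ remains open in your argument.

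For $k=2$ your structural reduction (three vertices of degree $3$, one leaf, a theta or dumbbell $2$-core) is essentially right, though the pendant path must attach at a degree-$2$ vertex of the $2$-core rather than at a branch vertex, since the maximum degree is $3$. However, ``comparing Laplacian characteristic polynomials'' of this infinite, multi-parameter family with those of all $R(a,b)$ is not a proof, and carried out literally it would be quite painful. The paper avoids it: the triangle count gives $t=t'-1$ (killing triangle-free rose graphs and $R(3,3)$ at once), so for $R(3,n-2)$ with $n>5$ the mate is triangle-free, and then the fourth moment (Lemma \ref{laplacequadrangles}) yields the relation \eqref{eef}, $e_{33}-e_{13}+2f=5+2\delta_{n,6}$, forcing the number $f$ of $4$-cycles to satisfy $2\leq f\leq 3$ (the upper bound because the mate is bicyclic); the spanning-tree count ($12$, $15$, or $16$ versus $3(n-2)$) then pins down $n$ and the structure, producing exactly the two mates in Figures \ref{fig:cosp34} and \ref{fig:cosp35}. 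Until you execute either this finite case analysis or your polynomial comparison, the $k=2$ case --- including the identification of the two exceptions and the uniqueness of their mates --- is not established.
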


\section{Preliminaries}\label{sec:pre}

For a graph $\G$, we let $A$ be its adjacency matrix, $D$ the diagonal matrix of vertex degrees, and $L=D-A$ its Laplacian matrix.

In order to derive our results, we will use some preliminary results. The first is very elementary and well known; see \cite[Lemma 14.4.3]{BrHa}, for example.

\begin{lemma}\label{laplacedeterminesnumbers} The Laplacian spectrum of a graph determines its number of vertices, number of edges, number of components, and number of spanning trees.
\end{lemma}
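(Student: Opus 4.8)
The plan is to read off each of the four quantities directly from the multiset of Laplacian eigenvalues. Write the spectrum as $0 = \mu_1 \leq \mu_2 \leq \cdots \leq \mu_n$, recalling that $L$ is positive semidefinite, so all $\mu_i \geq 0$. First, the number of vertices $n$ is simply the number of eigenvalues counted with multiplicity, since $L$ is an $n \times n$ matrix, and this is exactly the cardinality of the spectrum. Second, since $\tr(L) = \sum_v \deg(v) = 2m$, where $m$ is the number of edges, and the trace equals the sum $\sum_i \mu_i$ of the eigenvalues, the spectrum determines $m = \tfrac{1}{2}\sum_i \mu_i$.

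For the number of components, I would invoke the standard fact that the multiplicity of $0$ as an eigenvalue of $L$ equals the number of connected components of $\G$. The quick argument is that $L$ is positive semidefinite with quadratic form $x^\top L x = \sum_{uv} (x_u - x_v)^2$, the sum running over the edges; hence $Lx = 0$ forces $x$ to be constant on each component, so the kernel of $L$ is spanned by the indicator vectors of the components, and its dimension is the number of components. Since this dimension is a spectral quantity (the multiplicity of $0$), the claim follows.

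Finally, for the number of spanning trees I would appeal to the Matrix--Tree theorem. For a connected graph it yields the number of spanning trees as $\tfrac{1}{n}\prod_{i=2}^n \mu_i$, the product of the nonzero eigenvalues divided by $n$, which is manifestly determined by the spectrum. If the graph is disconnected (detectable from the multiplicity of $0$ exceeding $1$, as above), then it has no spanning tree at all and the count is $0$, again determined by the spectrum. This last step, relying on the Matrix--Tree theorem, is the only part that uses nontrivial machinery; the remaining three quantities follow from elementary linear algebra, which is why the lemma is stated as elementary and well known.
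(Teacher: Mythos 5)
Your proof is correct and follows exactly the standard argument: vertices from the spectrum's cardinality, edges from the trace, components from the multiplicity of the eigenvalue $0$, and spanning trees from the Matrix--Tree theorem (with the disconnected case handled separately). The paper itself gives no proof but cites this as well known from Brouwer and Haemers, whose proof is the same one you reconstructed, so there is nothing to add.
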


Also the following is known about the vertex degrees and the number of triangles in the graph; see \cite{F. J. Liu}. Because the proof given by Liu and Huang \cite{F. J. Liu} is quite technical, we provide a new, elementary proof of the second item. A similar result and proof was given for the signless Laplacian spectrum by Cvetkovi\'c, Rowlinson, and Simi\'c \cite[Cor.~4.3]{CRS} and for the Laplacian spectrum of signed graphs by Belardo and Petecki \cite[Thm.~3.4]{BP}.

\begin{lemma}\label{laplacetriangles} Let $\G$ be a graph with $n$ vertices, $t$ triangles, and vertex degrees $d_i$ for $i=1,2,\dots,n$. Then the following numbers are determined by the Laplacian spectrum of $\G$.
\begin{enumerate}[(i)]
\item $\sum_{i=1}^n d_i^2$,
\item $\sum_{i=1}^n d_i^3 - 6t$.
\end{enumerate}
\end{lemma}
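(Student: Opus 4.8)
The plan is to use the fact that the Laplacian spectrum $\mu_1,\dots,\mu_n$ determines all power sums $\tr(L^k)=\sum_{i=1}^n \mu_i^k$, and to extract the two desired quantities from $\tr(L^2)$ and $\tr(L^3)$ respectively. Writing $L=D-A$ and recalling that $A$ has zero diagonal (so that $\tr(A)=0$ and hence $\tr(L)=\tr(D)=\sum_i d_i$ is already a spectral invariant), everything reduces to expanding powers of $D-A$ and taking traces, using a few elementary identities: $\tr(A^2)=\sum_i d_i$ and $\tr(A^3)=6t$ (reading the diagonal of $A^m$ as counting closed walks of length $m$, with each triangle accounting for $6$ closed walks of length $3$), together with $(A^2)_{ii}=d_i$.

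For (i), I would compute $(D-A)^2=D^2-DA-AD+A^2$. The two mixed terms have zero trace because $A$ has zero diagonal, so $\tr(L^2)=\tr(D^2)+\tr(A^2)=\sum_i d_i^2+\sum_i d_i$. Since both $\tr(L^2)$ and $\sum_i d_i=\tr(L)$ are determined by the spectrum, so is $\sum_i d_i^2$.

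For (ii), I would expand $(D-A)^3$ into its eight terms. All three terms containing exactly one factor $A$ (namely $D^2A$, $DAD$, $AD^2$) have zero trace, again by the vanishing diagonal of $A$. The three terms containing exactly two factors $A$, namely $DA^2$, $ADA$, and $A^2D$, each contribute $\sum_i d_i^2$: this follows from $(A^2)_{ii}=d_i$ for the first and last, and from $\tr(ADA)=\sum_{i,j}A_{ij}d_j=\sum_j d_j^2$ for the middle one. The final term contributes $-\tr(A^3)=-6t$. Collecting gives $\tr(L^3)=\sum_i d_i^3+3\sum_i d_i^2-6t$, so that $\sum_i d_i^3-6t=\tr(L^3)-3\sum_i d_i^2$ is determined once we know $\tr(L^3)$ from the spectrum and $\sum_i d_i^2$ from part (i).

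The computations are entirely routine; the only point requiring a little care is the bookkeeping of the eight terms in the expansion of $(D-A)^3$ and the correct combinatorial reading of $\tr(A^3)=6t$, which is where the number of triangles enters. I expect no genuine obstacle here — the main work is simply organizing the trace computation and observing that every intermediate quantity that appears ($\tr(L)$, $\tr(L^2)$, $\tr(L^3)$, and $\sum_i d_i^2$) is either directly a spectral invariant or has already been shown to be one.
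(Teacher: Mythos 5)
Your proposal is correct and follows essentially the same route as the paper: both expand $\tr L^2$ and $\tr L^3$ term by term, use the vanishing diagonal of $A$ to kill the single-$A$ terms, and invoke the identities $\tr A^2=\sum_i d_i$, $\tr(DA^2)=\sum_i d_i^2$, and $\tr A^3=6t$ to arrive at $\tr L^2=\sum_i d_i^2+\sum_i d_i$ and $\tr L^3=\sum_i d_i^3+3\sum_i d_i^2-6t$. The only cosmetic difference is that you expand $(D-A)^3$ into all eight terms and collect them explicitly, whereas the paper states the collected form directly.
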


\begin{proof} Let $L=D-A$ be the Laplacian matrix of $\G$. Then the spectrum of $L$ determines $\tr L^j$ for every positive integer $j$.

By working out $L^2$, we obtain that
$$\tr L^2=\tr D^2- 2\tr DA+ \tr A^2=\sum_{i=1}^n d_i^2 +\sum_{i=1}^n d_i.$$
Because $\sum_{i=1}^n d_i =\tr L$ (which equals twice the number of edges), and this is determined by the Laplacian spectrum, it follows that also $\sum_{i=1}^n d_i^2$ is determined by the Laplacian spectrum.

Similarly, by working out $L^3$, we obtain that
$$\tr L^3=\tr D^3-3 \tr D^2A +3\tr DA^2-\tr A^3=\sum_{i=1}^n d_i^3+3 \sum_{i=1}^n d_i^2 -6t,$$
which proves ($ii$) (by using ($i$)).
\end{proof}

The following result seems new; we will apply this in Section \ref{sec:case k=2}.

\begin{lemma}\label{laplacequadrangles} Let $\G$ be a graph with $n$ vertices, $t$ triangles, $f$ $4$-cycles, and vertex degrees $d_i$ for $i=1,2,\dots,n$. Furthermore, let $t_i$ be the number of triangles through $i$, for $i=1,2,\dots,n$. Then
$$\sum_{i=1}^n d_i^4 + 2\sum_{i=1}^n\sum_{j \sim i}d_id_j - 8\sum_{i=1}^n d_it_i +24t+8f$$ is determined by the Laplacian spectrum of $\G$.
\end{lemma}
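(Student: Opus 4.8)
Since the Laplacian spectrum determines $\tr L^4$ for any graph, and by Lemma \ref{laplacetriangles} the spectrum already pins down lower-order quantities like $\sum_i d_i$, $\sum_i d_i^2$, $\sum_i d_i^3 - 6t$, the strategy is to expand $L^4 = (D-A)^4$ and identify which combinatorial counts appear. The target expression is clearly meant to be the "new" piece of $\tr L^4$ that is not already controlled by earlier lemmas.

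First I would expand $(D-A)^4$ into its $16$ words in $D$ and $A$, and take the trace of each. Writing $\tr L^4 = \tr\bigl((D-A)^4\bigr)$, the terms group by the number of $A$-factors. The purely-$D$ term gives $\sum_i d_i^4$. The terms with a single $A$ vanish under the trace since $A$ has zero diagonal and $\tr(D^aAD^b) = \sum_i d_i^{a+b}(A)_{ii}=0$. The terms with two $A$-factors produce expressions of the form $\sum_{i\sim j} d_i^a d_j^b$ (diagonal entries of words like $DADA$, $D^2A^2$, $DAD A$, etc.), which after collecting will yield $\tr A^2$-type sums weighted by degrees — these are what give $\sum_i d_i^2$ and the cross term $\sum_i\sum_{j\sim i} d_id_j$. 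The terms with three $A$-factors count closed walks of length related to triangles: $\tr(D A^3)$ and its cyclic variants give $\sum_i d_i t_i$ (and $\sum_i d_i^? t$), since $(A^3)_{ii}=2t_i$ and $\tr A^3 = 6t$. Finally the single term $\tr A^4$ counts closed walks of length $4$, which decomposes into the standard formula $\tr A^4 = \sum_i d_i^2 + 2\cdot(\text{number of paths of length }2) + 8f = 2\sum_i d_i^2 - \sum_i d_i + \dots + 8f$; the $8f$ is exactly the $4$-cycle contribution.

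The key bookkeeping step is to collect all these pieces and then \emph{subtract off} everything that is already known from the spectrum via Lemma \ref{laplacedeterminesnumbers} and Lemma \ref{laplacetriangles}. Concretely, $\tr L^4$ will be a linear combination of $\sum_i d_i^4$, $\sum_{i\sim j} d_id_j$, $\sum_i d_i t_i$, $t$, $f$, together with lower-order terms ($\sum_i d_i^2$, $\sum_i d_i^3$, $\sum_i d_i$) that are multiples of spectrum-determined quantities. Since $\tr L^4$ itself is determined by the spectrum, solving for the remaining part shows that the displayed combination $\sum_i d_i^4 + 2\sum_i\sum_{j\sim i} d_id_j - 8\sum_i d_it_i + 24t + 8f$ is determined. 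I would verify the coefficients $2$, $-8$, $24$, $8$ precisely by matching the multinomial coefficients in the expansion against the walk-counting identities for $\tr A^2$, $\tr A^3$, $\tr A^4$, and the mixed traces $\tr(D^aA D^bA)$.

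\textbf{The main obstacle} will be the careful treatment of the three-$A$ terms, where degree weights and triangle counts intertwine. The four words $D A^3$, $A D A^2$, $A^2 D A$, $A^3 D$ each contribute $\sum_i d_i t_i$ up to the factor $(A^3)_{ii}=2t_i$, but one must be sure no uncancelled $6t$ term (from $\tr A^3$, which has no $D$) creeps in with the wrong coefficient, and that the cyclic-invariance of the trace is used correctly to merge equal contributions. A secondary subtlety is confirming that the two-$A$ terms split cleanly into a genuinely-new $\sum_{i\sim j}d_id_j$ part versus a $\sum_i d_i^2$ part absorbed by Lemma \ref{laplacetriangles}; getting the coefficient $2$ right on the cross term depends on correctly counting how many of the $\binom{4}{2}$ placements of the two $A$'s yield adjacent-degree products rather than squared degrees.
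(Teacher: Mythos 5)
Your proposal is correct and takes essentially the same approach as the paper: expand $\tr L^4=\tr(D-A)^4$, evaluate the traces of the words using $(A^3)_{ii}=2t_i$ and the closed-walk formula $\tr A^4=2\sum_{i=1}^n d_i^2-\sum_{i=1}^n d_i+8f$, and subtract off what is already determined by Lemmas \ref{laplacedeterminesnumbers} and \ref{laplacetriangles} (the $24t$ arising precisely from trading $4\sum_{i=1}^n d_i^3$ for the determined quantity $4(\sum_{i=1}^n d_i^3-6t)$). The only sketch-level slip is that the two-$A$ words contribute $4\tr D^2A^2=4\sum_{i=1}^n d_i^3$ plus $2\tr DADA=2\sum_{i=1}^n\sum_{j\sim i}d_id_j$ (the $\sum_{i=1}^n d_i^2$ term comes from $\tr A^4$, not from these words), but this falls squarely under the coefficient verification you already flag and does not affect the argument.
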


\begin{proof} Similar as in the proof of Lemma \ref{laplacetriangles}, this follows from working out $\tr L^4$. Moreover, we will use that $(A^3)_{ii}=2t_i$ and that $\tr A^4=2\sum_{i=1}^n d_i^2-\sum_{i=1}^n d_i+8f$. The latter follows from counting the number of closed walks of length 4. Then
\begin{align*}
\tr L^4 & = \tr D^4 -4\tr D^3A +4\tr D^2A^2 +2\tr DADA -4\tr DA^3 +\tr A^4\\
&= \sum_{i=1}^n d_i^4 + 4\sum_{i=1}^n d_i^3 + 2\sum_{i=1}^n\sum_{j \sim i}d_id_j - 8\sum_{i=1}^n d_it_i +2 \sum d_i^2-\sum d_i+8f.
\end{align*}
Now the result follows by applying Lemma \ref{laplacetriangles} (and again that $\sum_{i=1}^n d_i$ equals twice the number of edges, which is also determined by the Laplacian spectrum).
\end{proof}

\section{A universal Laplacian matrix}\label{sec:universal}

For a graph $\G$ with adjacency matrix $A$ and degree matrix $D$, and fixed $\alpha$ and $\beta \neq 0$, we let $Q=Q(\alpha,\beta)=\alpha D+\beta A$. We call the matrix $Q$ a universal Laplacian matrix. Particular cases are the adjacency matrix $Q(0,1)$, the Laplacian matrix $Q(1,-1)$, and the signless Laplacian matrix $Q(1,1)$.
We note that Wang, Li, Lu, and Xu \cite{WeiWang} showed (among other results) that rose graphs are determined by the {\em set of spectra} of all universal Laplacian matrices, or in their terminology, by the generalized characteristic polynomial. Here we will show a result for each separate spectrum.

\begin{proposition}\label{genLap} Fix $\alpha$ and $\beta \neq 0$. Let $\G$ and $\G'$ be rose graphs that are cospectral with respect to the universal Laplacian matrix $Q(\alpha,\beta)$. Then $\G$ and $\G'$ are isomorphic.
\end{proposition}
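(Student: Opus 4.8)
The plan is to reduce the statement to a purely combinatorial claim and then to read the cycle lengths off the characteristic polynomial of $Q$. A $k$-rose is determined up to isomorphism by the multiset of its cycle lengths, since for $k\geq 2$ the central vertex is the unique vertex of degree $2k>2$ and the petals are exactly the cycles through it. Moreover two $Q$-cospectral roses $\G$ and $\G'$ automatically share the number of vertices $n$ (the degree of $\phi_Q$) and the number of edges: from $\tr Q=2\alpha\,|E|$ when $\alpha\neq0$, and from $\tr Q^2=2\beta^2\,|E|$ when $\alpha=0$. Hence they share the cyclomatic number $k=|E|-n+1$, so $k=k'$, and it suffices to prove that the $Q$-spectrum determines the multiset $\{c_1,\dots,c_k\}$ of cycle lengths.

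First I would compute $\phi_Q(x)=\det(xI-Q)$ by exploiting that the central vertex $v_0$ is a cut vertex: deleting it leaves the disjoint union $\bigsqcup_{i=1}^{k}P_{c_i-1}$ of paths. Writing $xI-Q$ in block form with one ``hub'' row and column for $v_0$ and a block-diagonal tridiagonal part for the paths, a Schur-complement computation expresses $\phi_Q$ through the path determinants $p_m$ satisfying $p_m=(x-2\alpha)p_{m-1}-\beta^2p_{m-2}$ with $p_0=1$, $p_1=x-2\alpha$. These are scaled Chebyshev polynomials of the second kind, so after the substitution $x-2\alpha=2\beta\cos\theta$ one obtains a closed form in which, up to a nonzero constant and a power of $\sin\theta$, $\phi_Q$ factors as an \emph{antisymmetric} part $\prod_i\sin(c_i\theta/2)$ times a \emph{secular} factor that vanishes precisely when
\[
\beta\sin\theta\sum_{i=1}^{k}\tan\!\Big(\tfrac{c_i\theta}{2}\Big)+(k-1)\bigl(\alpha+\beta\cos\theta\bigr)=0 .
\]
The same factorization is explained conceptually by the reflection symmetry of each petal: eigenvectors that are antisymmetric on one cycle and zero elsewhere (hence vanishing at $v_0$) contribute the eigenvalues $2\alpha+2\beta\cos(2\pi l/c_i)$ for $1\le l\le\lfloor(c_i-1)/2\rfloor$, independently of how the petals are attached, while the symmetric eigenvectors give the roots of the secular equation.

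The crux is then to recover $\{c_1,\dots,c_k\}$ from the full spectrum, that is, from the union of the antisymmetric eigenvalues (which transparently encode the cycle lengths) and the symmetric ones (which mix all petals together). I would peel off the largest cycle first: for $\beta>0$ the antisymmetric value $2\alpha+2\beta\cos(2\pi/c_{\max})$ lies closest to the band edge $2\alpha+2\beta$, and a monotonicity/interlacing analysis of the secular function between its consecutive poles $\theta=(2l+1)\pi/c_i$ should pin down, interval by interval, how many petals have each given length, after which an induction on $c_{\max}$ (or on the number of distinct cycle lengths) finishes. The main obstacle I expect is exactly this disentangling step: distinct cycle lengths may share antisymmetric eigenvalues (for instance $c$ and its multiples), and an antisymmetric value may coincide with a symmetric one, so the naive multiplicity count must be corrected. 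Controlling these coincidences — in particular the parity split between even and odd petals, where $2\alpha-2\beta$ occurs only symmetrically — is where the real work lies. Should the interlacing bookkeeping become unwieldy, I would instead pass to the coefficient (Sachs-type) description of $\phi_Q$: since the only cycles of a rose are its petals, the coefficients reduce to matching numbers of $\G$ and of the path-forests $\bigsqcup_{j\neq i}P_{c_j-1}$, from which the multiset of path lengths, and hence $\{c_1,\dots,c_k\}$, can be extracted using the matchings-in-disjoint-unions-of-paths lemma announced for Section~\ref{sec:Sachs}.
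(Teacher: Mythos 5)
Your preliminary reduction is sound and matches the paper's opening step: $Q$-cospectral roses have the same $n$ and $|E|$ (via $\tr Q$ for $\alpha\neq 0$, via $\tr Q^2$ for $\alpha=0$), hence the same $k$, and a rose is determined by the multiset of its cycle lengths. Your description of the spectrum of $Q$ is also correct: each petal of length $c_i$ contributes ``antisymmetric'' eigenvalues $2\alpha+2\beta\cos(2\pi l/c_i)$, $1\le l\le\lfloor (c_i-1)/2\rfloor$, coming from eigenvectors vanishing at the central vertex, and the rest of the spectrum consists of roots of a secular equation mixing all petals. But the proof stops exactly where the problem begins: recovering $\{c_1,\dots,c_k\}$ from the \emph{union} of these two families is the entire content of Proposition~\ref{genLap}, and the ``peeling'' step is not carried out --- you yourself flag the coincidences (a length $c$ sharing antisymmetric eigenvalues with its multiples, antisymmetric values colliding with secular roots) as ``where the real work lies.'' Moreover the proposed peeling criterion is wrong as stated: the eigenvalue of $\G$ closest to the band edge $2\alpha+2\beta$ need not be the antisymmetric value belonging to $c_{\max}$, since secular roots are not confined away from the band edge (already the Perron-type root lies beyond it, and further symmetric roots interlace with the poles of the secular function inside the band). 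So the main route is a program, not a proof.

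The fallback route has its own gap. Sachs' theorem expands $\det(xI-A)$; for $Q=\alpha D+\beta A$ with $\alpha\neq 0$ the coefficients of $\det(xI-Q)$ pick up the diagonal entries $\alpha d_i$ in every principal minor, so they are \emph{not} just matching numbers of the rose and of the path forests $\bigsqcup_{j\neq i}P_{c_j-1}$ --- one needs a vertex-weighted generalization, and the weight at the central vertex differs from all others, which is precisely what makes that bookkeeping nontrivial. Also, Lemma~\ref{samenumber} states that matching counts in a disjoint union of $k$ paths are \emph{independent} of the individual path lengths; in Section~\ref{sec:Sachs} (which treats only the adjacency case) it is used inside an induction on $\ell$ to show that all non-cycle Sachs contributions agree, thereby isolating the number of $\ell$-cycles --- it cannot be used to ``extract the multiset of path lengths,'' which is rather the opposite of what it says. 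For comparison, the paper's actual proof never computes $\phi_Q$: it inducts on the cycle length $\ell$, expands $\tr Q^\ell$ into $\tr A^\ell$ plus mixed $A$/$D$-products, and exhibits a degree- and distance-preserving bijection between the vertex sets under which all mixed terms (weighted closed walks of length $<\ell$) agree, while $\tr A^\ell$ would strictly differ if the $\ell$-cycle counts differed. That argument treats all $\alpha,\beta$ uniformly and avoids the eigenvalue disentangling entirely; to salvage your approach you would either have to carry out the interlacing analysis in full, or prove the weighted Sachs expansion and redo the induction of Section~\ref{sec:Sachs} with vertex weights.
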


\begin{proof} Without loss of generality, we take $\beta=1$. Let $Q=A+\alpha D$ and $Q'=A'+\alpha D'$ be the universal Laplacian matrices of $\G$ and $\G'$, respectively. Note first that the number of edges of $\G$ and $\G'$ must be the same. Indeed, for $\alpha \neq 0$ this follows from the fact that $\tr Q=\tr Q'$; for $\alpha = 0$ it follows from $\tr A^2=\tr A'^2$. Therefore, both $\G$ and $\G'$ are $k$-rose graphs for a certain $k$, and thus they have the same degree sequence.

We will then use that $\tr Q^{\ell}=\tr Q'^{\ell}$ for all $\ell$, and use induction to show that $\G$ and $\G'$ have the same number of $\ell$-cycles. Because both graphs are $k$-rose graphs, this implies that they are isomorphic.

To prove both the basis $\ell=3$ and the induction step, let $\ell \geq 3$, and assume that $\G$ and $\G'$ have equally many $j$-cycles for each $j<\ell$. Note that this is a valid assumption for the basis $\ell=3$.

 By working out $Q^{\ell}$ in terms of $A$ and $D$, we obtain that $$\tr A^{\ell}+ \tr B =\tr A'^{\ell}+\tr B'$$ for certain matrices $B$ and $B'$ that are weighted sums of products $B_t$ of the matrices $D$ and $A$, and $B'_t$ of $D'$ and $A'$, respectively, with $t\in T$ for some index set $T$. The particular weights in $B$ and $B'$ are the same; moreover in $B_t$ and $B'_t$, the matrices $A$ and $A'$ appear less than $\ell$ times. Because the product $B_t$ also contains the degree matrix $D$, the trace of $B_t$ counts a weighted number of closed walks of given length less than $\ell$ with weights from the degree matrix. For example, $\tr AD^2A^3D$ counts the weighted number of closed walks $u_0 \sim u_1 \sim u_2 \sim u_3 \sim u_4=u_0$ of length $4$, weighted with $d_1^2d_4$ (where $d_i$ is the degree of $u_i$ for $i=1$ and $4$).

Suppose now that $\G$ and $\G'$ have a different number of $\ell$-cycles. Without loss of generality, we assume that $\G$ has more $\ell$-cycles than $\G'$. We will show that this implies that
\begin{equation}\label{ellwalks}
\tr A^{\ell}>\tr A'^{\ell},
\end{equation}
whereas
\begin{equation}\label{smallwalks}
\tr B = \tr B',
\end{equation}
which gives a clear contradiction (and thus proves that $\G$ and $\G'$ have the same number of $\ell$-cycles). In order to show this, we will give a bijection $\phi: V(\G) \rightarrow V(\G')$ that preserves the degree and the weighted number of closed walks of given length $j$ less than $\ell$ that start at a given vertex, thus showing \eqref{smallwalks}.
Moreover, for many vertices the number of closed walks of length $\ell$ that start at a given vertex will be preserved, but for some the number will decrease, thus showing \eqref{ellwalks}.

\begin{figure}[!htp]
\begin{center}
\includegraphics[width=12cm]{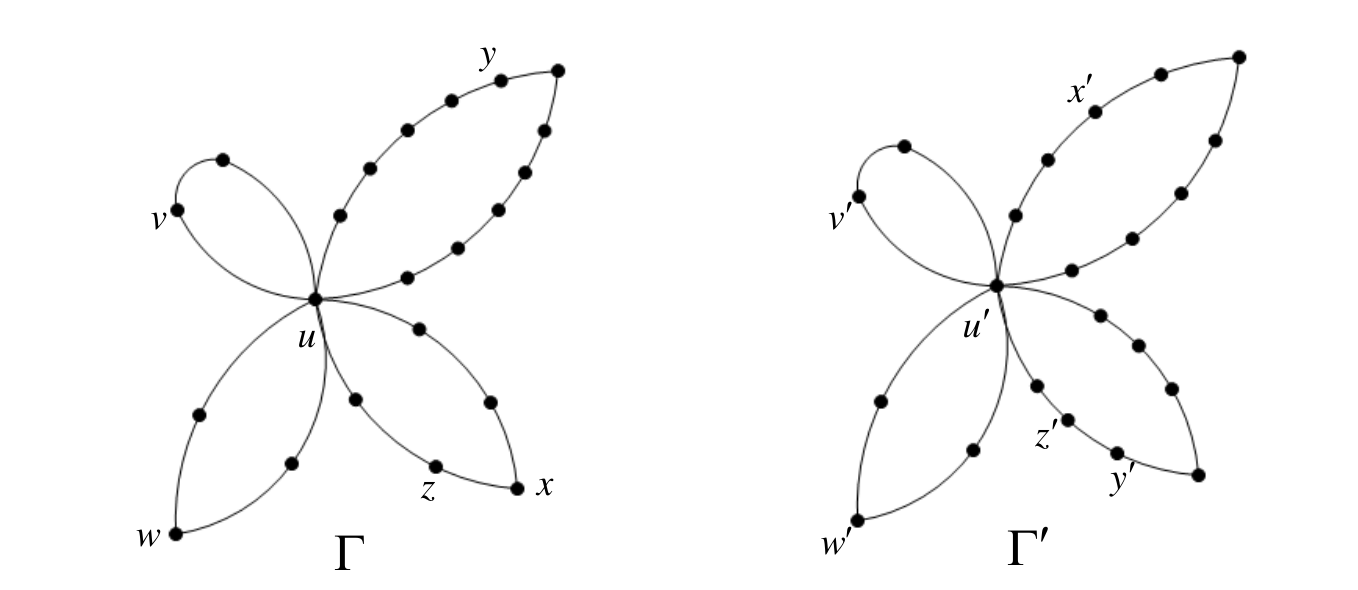}
\end{center}
\caption{The $4$-rose graphs $\G$ and $\G'$; $\ell=6$}\label{fig:twok}
\end{figure}

The map $\phi$ is built up as follows; see also Figure \ref{fig:twok} for an example with $\ell=6$.
We consider the subgraph of $\G'$ induced by the vertices on the $j$-cycles, with $j \leq \ell$. In other words, we remove the larger cycles, of which there are $s$, say. It is clear that $\G$ has a subgraph isomorphic to this subgraph of $\G'$. We now use a fixed isomorphism in the canonical way for the map $\phi$ (on the vertices of the subgraph of $\G$). In case there is just one cycle, we make sure that the central vertex of $\G$ (i.e., the unique one with degree $2k$) is mapped to the central vertex of $\G'$ (in the other cases, this goes automatically). For the remaining vertices, which are all on cycles of length at least $\ell$, we make sure that $\phi$ preserves the distance to the central vertex, as long as this distance is less than $\ell/2$. Because both $\G$ and $\G'$ have the same number $s$ of ``remaining (larger) cycles'', this is possible. Vertices of $\G$ that are at distance precisely $\ell/2$ from the central vertex are also mapped to vertices of $\G'$ that are at distance precisely $\ell/2$ from the central vertex (note that $\G'$ has more such vertices than $\G$; if $\ell$ is even). The remaining vertices of $\G$ are mapped arbitrarily, but one-to-one, to the remaining vertices of $\G'$.

Let us now argue that $\phi$ has the required properties to show \eqref{ellwalks} and \eqref{smallwalks}. First of all, it is clear that $\phi$ preserves the degrees of vertices. Now let us consider the weighted number $w_j(u)$ of closed walks of length $j$ that starts at a vertex $u$ of $\G$, for $j \leq \ell$. For such a vertex $u$, let us abbreviate $\phi(u)$ by $u'$. Note now that in every closed walk of length $j$ that starts in a given vertex $u$ (in any graph), there are only vertices involved that are at distance at most $j/2$ from $u$, and no edges are involved between those vertices that at are distance $j/2$ from $u$. We therefore define the {\em weighted rooted closed walk graph} $C_j(u)$ as the subgraph of $\G$ induced on the vertices at distance at most $j/2$ from $u$ from which the edges, if any, between the vertices at distance exactly $j/2$ from $u$ have been removed. Moreover, we assign $u$ as its root and take the degree of a vertex in the original graph as its weight. With this definition, $w_j(u)$ equals the weighted number of closed walks of length $j$ in $C_j(u)$ that start in its root. Similarly, we define $C'_j(u')$ as a weighted rooted closed walk graph of $\G'$. In order to compare $w_j(u)$ to $w'_j(u')$, we can now restrict to comparing weighted closed walks in $C_j(u)$ and $C'_j(u')$.

For $j<\ell$, it is straightforward to check that the weighted rooted graphs $C_j(u)$ and $C'_j(u')$ are isomorphic for all $u$ (where we recall that $u'=\phi(u)$), and hence $w_j(u)=w'_j(u')$ for all $u$. Thus, \eqref{smallwalks} follows.

The situation for $j=\ell$ is more subtle of course, but except for the central vertex and those vertices $u$ that are on an $\ell$-cycle while $u'$ is not, the {\em unweighted} rooted closed walk graphs $C_j(u)$ and $C'_j(u')$ are again isomorphic. Note that in this case we count unweighted closed walks, and indeed, this implies that $A^{\ell}_{uu}=A'^{\ell}_{u'u'}$, except in the described cases. If $u$ is on an $\ell$-cycle while $u'$ is not, then it in fact follows that $A^{\ell}_{uu}=A'^{\ell}_{u'u'}+2$, because there are two extra closed walks; those walking around the $\ell$-cycle (with two directions possible). Also if $u$ is the central vertex, then $A^{\ell}_{uu}>A'^{\ell}_{u'u'}$. All together, this shows \eqref{ellwalks}. As argued before, this gives a contradiction, thus proving the induction, and therefore that $\G$ and $\G'$ are isomorphic.
\end{proof}

\section{In memory of Horst Sachs (1927-2016)}\label{sec:Sachs}

For the adjacency matrix, the result in Proposition \ref{genLap} can also be obtained in a different way, namely by using Sachs' theorem \cite{Sachs}. In memory of professor Horst Sachs, one of the authors of the influential monograph \cite{CDS} on spectra of graphs, we include a sketch of this alternative proof here. Let us first recall Sachs' theorem.

Given a graph $\G$ on $n$ vertices and a positive integer $i$, a Sachs $i$-subgraph $S$ of $\G$ is a disjoint union of cycles and edges on a total of $i$ vertices. The number of components of $S$ and the number of cycles in $S$ are denoted by $k(S)$ and $c(S)$, respectively. We denote the set of Sachs $i$-subgraphs of $\G$ by $\mathcal{S}_i(\G)$. Sachs' theorem expresses the coefficient $a_i$ of (the term $x^{n-i}$ of) the characteristic polynomial of $\G$ in terms of its Sachs subgraphs as
\begin{equation}\label{sachscoef}
a_i=\sum_{S \in \mathcal{S}_i(\G)}(-1)^{k(S)}2^{c(S)}.
\end{equation}
For example, from this it follows that $a_1=0$, $-a_2$ equals the number of edges of $\G$, and $-a_3$ equals twice the number of triangles of $\G$.

In the rose graphs that we are considering in this paper, a Sachs subgraph is either the disjoint union of one cycle and a matching (a disjoint union of edges), or simply a matching. For both cases, counting Sachs subgraphs boils down to counting matchings in disjoint unions of paths. Indeed, once a cycle is chosen as subgraph (and ``removed" from the rose graph), a disjoint union of paths remains in which matchings have to be counted. To count the number of $j$-matchings (disjoint unions of $j$ edges) in a $k$-rose graph, we can distinguish between those matchings that contain an edge through the central vertex and those that do not. To be more precise, let $m(\G,j)$ denote the number of $j$-matchings in a graph $\G$. If $\G$ is a $k$-rose graph with central vertex $c$ that is adjacent to vertices $u_i$ for $i=1,2,\dots, 2k$, then it is clear that
$$m(\G,j)=m(\G-c,j)+\sum_{i=1}^{2k}m(\G-c-u_i,j-1),$$
where $\G-c$ is the graph obtained from $\G$ by removing $c$ (and incident edges), and similarly $\G-c-u_i$ is the graph obtained by removing $c$ and $u_i$, for $i=1,2,\dots, 2k$. Clearly all these graphs are disjoint unions of paths. For our result, we do not have to derive the actual number of matchings in a disjoint union of paths (or in a $k$-rose graph); instead it will suffice to use the following.

\begin{lemma}\label{samenumber} Let $n, k, i$ be positive integers. The number of $i$-matchings in any disjoint union of $k$ paths, each on at least $i$ vertices, on a total of $n$ vertices, is independent of the lengths of the individual paths.
\end{lemma}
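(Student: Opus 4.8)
The plan is to encode matchings by a single-variable \emph{matching generating polynomial}. For a path $P_a$ on $a$ vertices the number of $p$-matchings is $\binom{a-p}{p}$, so I set $M_a(x)=\sum_{p\ge 0}\binom{a-p}{p}x^p$. Since the matching polynomial of a disjoint union factors as a product, the number of $i$-matchings in $P_{a_1}\cup\cdots\cup P_{a_k}$ is exactly the coefficient of $x^i$ in $\prod_{t=1}^k M_{a_t}(x)$. Thus the lemma becomes the statement that $[x^i]\prod_t M_{a_t}(x)$ depends only on $n=\sum_t a_t$, on $k$, and on $i$, provided every $a_t\ge i$.

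First I would record the recurrence $M_a=M_{a-1}+xM_{a-2}$ (obtained by deciding whether the last vertex of the path is matched), with $M_0=M_1=1$. The heart of the argument is the two-path identity
$$M_a(x)M_b(x)-M_{a+1}(x)M_{b-1}(x)=(-x)^{b}M_{a-b}(x)\qquad(a\ge b),$$
which I would prove by induction on $b$: applying the recurrence twice shows that the left-hand side $D(a,b)$ satisfies the \emph{same} recurrence $D(a,b)=D(a,b-1)+xD(a,b-2)$ as $M$ does, while the base cases $b=0,1$ are immediate. (Alternatively it drops out of the closed form $M_a=(\lambda_+^{a+1}-\lambda_-^{a+1})/(\lambda_+-\lambda_-)$, where $\lambda_\pm$ are the roots of $\lambda^2=\lambda+x$.) The crucial consequence is purely about degrees: since $M_{a-b}$ has constant term $1$, the difference $M_aM_b-M_{a+1}M_{b-1}$ has no terms of degree below $b$; the symmetric computation for $a<b$ gives no terms below $a+1$, so in all cases the minimum degree is at least $\min(a+1,b)$.

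I would then read this as invariance under an elementary \emph{move} that shifts one vertex from one path to another, replacing lengths $(p,q)$ by $(p+1,q-1)$. By the degree bound, this move alters $\prod_t M_{a_t}(x)$ only in degrees $\ge\min(p+1,q)$, hence leaves the coefficient of $x^m$ unchanged for every $m<\min(p+1,q)$. In our situation every path has at least $i$ vertices, so a \emph{legal} move (one keeping all paths $\ge i$) shrinks a path with $q\ge i+1$ while the grown path has $p\ge i$; therefore $\min(p+1,q)\ge i+1$ and the coefficient of $x^i$ is preserved. This is precisely where the hypothesis ``each path has at least $i$ vertices'' enters, and it is sharp: the illegal move $(2,3)\to(1,4)$ with $i=2$ changes the count from $2$ to $1$.

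Finally I would observe that the set of tuples $(a_1,\dots,a_k)$ with $\sum_t a_t=n$ and all $a_t\ge i$ is connected under legal moves: from any such tuple one can repeatedly move a unit from a coordinate exceeding $i$ into a fixed coordinate until the canonical tuple $(n-(k-1)i,i,\dots,i)$ is reached. Since $[x^i]\prod_t M_{a_t}(x)$ is invariant along every legal move, it takes the same value on all admissible tuples, proving the lemma. I expect the only genuinely delicate point to be the degree bookkeeping around the two-path identity---both establishing the identity and verifying that the threshold $\min(p+1,q)$ strictly exceeds $i$ for every legal move; the connectivity step is routine.
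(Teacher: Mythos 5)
Your proof is correct, but it takes a genuinely different route from the paper's. The paper argues by induction on the matching size $i$: given two admissible unions of $k$ paths, it forms the coordinate-wise minimum union, reaches that union from both graphs by deleting pendant vertices one at a time, and classifies $i$-matchings by the first deleted edge they use; this writes each count as the count in the common smaller graph plus a sum of $(i-1)$-matching counts in unions of paths each on at least $i-1$ vertices, to which the induction hypothesis applies. You instead work algebraically with the matching generating polynomials $M_a(x)$ (Fibonacci-type polynomials), use multiplicativity over disjoint unions, and reduce everything to the two-path identity $M_aM_b-M_{a+1}M_{b-1}=(-x)^bM_{a-b}$, which confines the effect of shifting one vertex between two paths to degrees at least $\min(p+1,q)$; connectivity of the admissible tuples under legal moves then finishes the argument. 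Your route buys an exact error term: you know precisely which coefficients change under a move and by how much, so the sharpness of the hypothesis (your example $(2,3)\to(1,4)$ with $i=2$, matching the paper's $P_1\sqcup P_3$ versus $P_2\sqcup P_2$) and the invariance of \emph{all} coefficients of degree at most $i$ come for free. The paper's route is more elementary and self-contained---pure counting, no polynomial identities---and its pendant-deletion bookkeeping is the same style of argument the authors reuse when applying the lemma to rose graphs, where some paths may be shorter than $i$ but occur equally often in both graphs. Two minor points to tighten in your write-up: the base case $b=0$ of the two-path identity needs the convention $M_{-1}=0$ (or start the induction at $b=1,2$), and the induction step implicitly uses that the right-hand side $(-x)^bM_{a-b}$ satisfies the same recurrence, which follows from $M_{a-b+2}-M_{a-b+1}=xM_{a-b}$; both are routine.
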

\begin{proof} In order to prove this, we use induction on $i$. For $i=1$, we have to count the number of edges, which is clearly the same, i.e., $n-k$, for every disjoint union of $k$ paths on a total of $n$ vertices.

To prove the induction step, let $i>1$, and assume that the result is true for $i-1$. let $\G=P_{\ell_1}\sqcup P_{\ell_2}\sqcup\cdots\sqcup P_{\ell_k}$  and  $\G'=P_{\ell'_1}\sqcup P_{\ell'_2}\sqcup\cdots\sqcup P_{\ell'_k}$ be two graphs that are disjoint unions of $k$ paths, with $\ell_j \geq i$ and $\ell'_j \geq i$ for every $j=1,2,\dots,k$ and $\sum_{j=1}^k \ell_j=\sum_{j=1}^k \ell'_j=n$.

Let $\G''=P_{\ell''_1}\sqcup P_{\ell''_2}\sqcup\cdots\sqcup P_{\ell''_k}$, where $\ell''_j=\min\{\ell_j,\ \ell'_j\}$ for $j=1,2,\dots,k$.
The graph $\G''$ can be obtained from both $\G$ and $\G'$, by deleting the same number of vertices, $s$ say. In fact, this can be done by a sequence of $s$ steps, where in each step $h=1,2,\dots,s$ we remove a pendant vertex $u_h$ and its corresponding edge $e_h=u_hv_h$ from $\G$ and pendant vertex $u'_h$ and its corresponding edge $e'_h=u'_hv'_h$ from $\G'$. Thus, we have sequences of graphs $(\G_h)_{h=0}^s$ and $(\G'_h)_{h=0}^s$, where $\G_0=\G$, $\G'_0=\G'$, $\G_h=\G_{h-1}-u_h$, $\G'_h=\G'_{h-1}-u'_h$, for $h=1,2,\dots,s$, and finally $\G_s=\G'_s=\G''$. In this way, $\G_h$ and $\G'_h$ are disjoint unions of $k$ paths, and each path is on at least $i$ vertices, for every $h=0,1,\dots,s$.

By distinguishing between matchings that do not contain any of the edges $e_h$, for $h=1,2,\dots,s$, and those that do, we obtain the expression
\begin{equation}\label{matchingi}
m(\G,i)=m(\G'',i)+\sum_{h=1}^s m(\G_{h}-v_h,i-1).
\end{equation}
Indeed, $m(\G'',i)$ is the number of $i$-matchings of $\G$ that do not contain any of the edges $e_h$, for $h=1,2,\dots,s$, whereas $m(\G_{h}-v_h,i-1)$ is the number of $i$-matchings of $\G$ that contain $e_h$, but none of the edges $e_1,e_2,\dots,e_{h-1}$. Similarly, we obtain that
\begin{equation}\label{matchingi'}
m(\G',i)=m(\G'',i)+\sum_{h=1}^s m(\G'_{h}-v'_h,i-1).
\end{equation}

By the induction hypothesis, and because $\G_h-v_h$ and $\G'_h-v'_h$ have the same number of vertices and they are both disjoint unions of paths, each on at least $i-1$ vertices, it now follows from \eqref{matchingi} and \eqref{matchingi'} that $m(\G,i)=m(\G',i)$, which finishes the proof.
\end{proof}

We note that the condition that each of the paths has at least $i$ vertices is necessary in general. For example, $P_1\sqcup P_3$ has no $2$-matchings, whereas $P_2\sqcup P_2$ does have one. Still, we will use the above result in cases where the two disjoint unions of paths may have some paths of too small a size. However, the two graphs will only differ in the ``larger'' paths, and in that case using the above result still gives the same numbers of matchings. Indeed, it follows easily from the lemma that if $\G$ and $\G'$ are as in the above proof, and $\G'''$ is some other graph (in particular, a disjoint union of ``small'' paths), then the numbers of $i$-matchings in $\G \sqcup \G'''$ and in $\G'\sqcup \G'''$ are the same.

Let us now finish the sketch of the proof. Consider two $k$-rose graphs $\G$ and $\G'$ that have the same adjacency spectrum, and hence the same coefficients $a_{\ell}$ and $a'_{\ell}$, for $\ell=1,2,\dots,n$ of the characteristic polynomial. Like in the proof of Proposition \ref{genLap}, we aim to show by induction on $\ell$ that the two graphs have the same number of $\ell$-cycles.
Let $\ell \geq 3$ and assume that the two graphs have the same number of $j$-cycles for $j < \ell$. Let us consider Sachs $\ell$-subgraphs of $\G$ and $\G'$, respectively. Let $3 \leq j<\ell$. Because the number of $j$-cycles in $\G$ and $\G'$ is the same, it follows from Lemma \ref{samenumber} and the earlier arguments that $\G$ and $\G'$ have the same number of Sachs subgraphs of the form $C_j\sqcup \frac{\ell-j}2K_2$. Indeed, as indicated, once $C_j$ is removed, what remains is a disjoint union of paths. There may be paths of size at most $\ell-2$ coming from cycles of size at most $\ell-1$, but there are equally many of each length in the two graphs under consideration, and hence the number of $\frac{\ell-j}2$-matchings is the same. The same argument applies to show that the two graphs have the same number of $\frac{\ell}2$-matchings (when $\ell$ is even). Because all these numbers of the above types of Sachs $\ell$-subgraphs are the same for $\G$ and $\G'$, it follows from \eqref{sachscoef} that the same must be true for the remaining type of Sachs $\ell$-subgraphs: the $\ell$-cycle. Thus, $\G$ and $\G'$ have the same number of $\ell$-cycles, which finishes the sketch of the proof.

\section{The characterization by the Laplacian spectrum}\label{sec:case3+}

In this section, we will prove our main result, that is, that every $k$-rose graph with $k \geq 3$ is determined by the Laplacian spectrum. In order to do this, we need two lemmas about the degree sequence of a graph that is cospectral to a $k$-rose graph, after which we can apply Proposition \ref{genLap}.

\begin{lemma}\label{maxdegree} Let $k \geq 2$ and let $\G$ be a graph with the same Laplacian spectrum as a $k$-rose graph. If $\G$ has maximum degree at least $2k$, then $\G$ is a $k$-rose graph.
\end{lemma}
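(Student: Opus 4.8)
The plan is to use the spectral invariants from Section~\ref{sec:pre} to pin down the degree sequence of $\G$. Since $\G$ is cospectral with a $k$-rose graph, Lemma~\ref{laplacedeterminesnumbers} tells us that $\G$ has the same number of vertices $n$ and the same number of edges $m$ as the $k$-rose graph. A $k$-rose graph on $n$ vertices has one vertex of degree $2k$ and $n-1$ vertices of degree $2$, so $\sum_i d_i = 2m = 2k + 2(n-1)$. The key constraint comes from Lemma~\ref{laplacetriangles}(i): the value of $\sum_i d_i^2$ is determined by the Laplacian spectrum and hence equals its value for the $k$-rose graph, namely $(2k)^2 + 4(n-1) = 4k^2 + 4(n-1)$.

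The heart of the argument is then a convexity/extremality computation. First I would note that $\G$ has no isolated vertices: since $\G$ has the same number of components (one) as the $k$-rose graph by Lemma~\ref{laplacedeterminesnumbers}, $\G$ is connected, so every degree is at least~$1$. Now suppose $\G$ has maximum degree $\Delta \geq 2k$, attained at some vertex. I would argue that among all degree sequences of connected graphs on $n$ vertices with the prescribed edge sum $\sum_i d_i = 2k + 2(n-1)$ and with a vertex of degree at least $2k$, the quantity $\sum_i d_i^2$ is minimized exactly by the $k$-rose degree sequence (one vertex of degree $2k$, the rest of degree~$2$). Concretely, fix the vertex of degree $\Delta \geq 2k$; the remaining $n-1$ vertices have degrees summing to $2k + 2(n-1) - \Delta$, each at least~$1$. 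Because $x \mapsto x^2$ is strictly convex and the remaining degrees are constrained by $d_i \geq 1$ (with the connectivity forcing no vertex to be dropped), $\sum_i d_i^2$ is minimized when the large degree is as small as possible ($\Delta = 2k$) and the remaining mass is spread as evenly as possible (all remaining degrees equal to~$2$). Any deviation — either $\Delta > 2k$, or a spread of the small degrees away from the all-$2$'s pattern — strictly increases $\sum_i d_i^2$ above the target value $4k^2 + 4(n-1)$, contradicting that this sum is a spectral invariant. Hence $\Delta = 2k$ and all other degrees equal~$2$, so $\G$ has exactly the degree sequence of a $k$-rose graph.

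Having forced the degree sequence, I would finish by showing such a $\G$ must actually be a $k$-rose graph. A connected graph in which one vertex has degree $2k$ and all others have degree~$2$ is, by definition, a $k$-rose graph: deleting the degree-$2k$ vertex leaves a disjoint union of paths (each internal path vertex has degree~$2$ and the path endpoints are exactly the former neighbors of the central vertex), and reattaching the central vertex closes these paths into cycles through it. Connectivity guarantees there are no vertex-disjoint $2$-regular components (bare cycles) avoiding the central vertex, so every cycle passes through it, which is precisely the definition of a $k$-rose graph.

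I expect the main obstacle to be the extremality step: making rigorous that the $k$-rose degree sequence is the \emph{unique} minimizer of $\sum_i d_i^2$ under the constraints, and in particular handling the boundary constraint $d_i \geq 1$ carefully so that the convexity argument genuinely forces every small degree to equal exactly~$2$ rather than merely bounding it. The delicate point is that the hypothesis gives only $\Delta \geq 2k$ at a single vertex; I must rule out the possibility of two or more high-degree vertices or a more irregular low-degree distribution, which the convexity argument does precisely because any such configuration strictly overshoots the fixed target value of $\sum_i d_i^2$.
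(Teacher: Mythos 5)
Your proposal is correct and follows the same strategy as the paper: Lemmas \ref{laplacedeterminesnumbers} and \ref{laplacetriangles}(i) fix $n$, the number of edges, connectivity, and $\sum_i d_i^2$; these constraints force the rose degree sequence; connectivity then finishes the proof. The only real difference is how the degree sequence is forced, and here the paper's execution dissolves exactly the step you flag as your main obstacle. Instead of a constrained convexity minimization, the paper completes the square: since $\sum_i d_i^2$, $\sum_i d_i$, and $n$ are all Laplacian invariants, so is $\sum_i (d_i-2)^2$, and its value on a $k$-rose graph is $(2k-2)^2$. If now some vertex of $\G$ has degree $d_j \geq 2k$, the single nonnegative term $(d_j-2)^2 \geq (2k-2)^2$ already exhausts the whole sum, so $d_j=2k$ and every other degree equals $2$ --- no case distinction between $\Delta>2k$ and $\Delta=2k$, no ``spreading evenly'' argument, and no care about the boundary constraint $d_i\geq 1$ is needed (that constraint is in fact irrelevant: in your minimization the optimum at $\Delta=2k$ already has all small degrees equal to $2$). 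If you do want to finish along your route, the same shift makes it rigorous: writing $d_1=2k+a$ with $a\geq 0$ and $d_i=2+e_i$ for $i\geq 2$ (so $\sum_{i\geq 2}e_i=-a$ by the edge count), one gets $\sum_i d_i^2 = 4k^2+4(n-1)+a(a+4k-4)+\sum_{i\geq 2}e_i^2$, which exhibits the $k$-rose sequence as the unique minimizer for $k\geq 2$ and turns your extremality claim into two visibly nonnegative terms. Your closing step (a connected graph with one vertex of degree $2k$ and all others of degree $2$ is a $k$-rose graph) agrees with the paper, which simply takes this as the alternative definition of a rose graph stated in the introduction.
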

\begin{proof} Denote the vertex degrees of $\G$ by $d_i$, for $i=1,2,\dots,n$. Because $\G$ is cospectral to a $k$-rose graph, which has vertex degrees $2k$ (once) and 2 ($n-1$ times), it follows from applying Lemmas \ref{laplacedeterminesnumbers} and \ref{laplacetriangles} that
\begin{equation}\label{sumsdegrees} \sum_{i=1}^n (d_i-2)^2= (2k-2)^2. \end{equation}
If $\G$ has maximum degree at least $2k$, then it follows from \eqref{sumsdegrees} that one of the degrees equals $2k$, and the other degrees equal $2$. Because $\G$ is connected by Lemma \ref{laplacedeterminesnumbers}, $\G$ must therefore be a $k$-rose graph.
\end{proof}

\begin{lemma}\label{inequality}
Let $k\geq 3$, let $\G$ be a graph with the same Laplacian spectrum as a $k$-rose graph, and let the vertex degrees of $\G$ be denoted by $d_i$, for $i=1,2,\dots,n$. If $\G$ is not a rose graph, then
 $$\sum_{i=1}^n (d_i-2)^3< (2k-2)^3-6k.$$
 \end{lemma}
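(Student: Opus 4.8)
The plan is to treat this as a feasibility statement about the degree sequence, using the two ``moment'' constraints that the Laplacian spectrum imposes together with the bound on the maximum degree coming from Lemma \ref{maxdegree}. Writing $e_i = d_i - 2$, I would first record the two facts the spectrum already supplies: from \eqref{sumsdegrees} we have $\sum_{i=1}^n e_i^2 = (2k-2)^2$, and since the numbers of vertices and edges are determined (Lemma \ref{laplacedeterminesnumbers}) and a $k$-rose graph on $n$ vertices has $n+k-1$ edges, we also get $\sum_{i=1}^n e_i = 2k-2$. Moreover, since $\G$ is assumed not to be a rose graph, Lemma \ref{maxdegree} forces the maximum degree to be at most $2k-1$, that is, $e_i \le 2k-3$ for all $i$.

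The key device is the elementary identity
\begin{equation*}
\sum_{i=1}^n e_i^3 = (2k-3)\sum_{i=1}^n e_i^2 - \sum_{i=1}^n e_i^2\bigl((2k-3)-e_i\bigr) = (2k-2)^3 - (2k-2)^2 - \sum_{i=1}^n e_i^2\bigl((2k-3)-e_i\bigr),
\end{equation*}
where I substituted $\sum e_i^2 = (2k-2)^2$ and $(2k-3)(2k-2)^2 = (2k-2)^3 - (2k-2)^2$. Because $e_i \le 2k-3$, every summand $e_i^2\bigl((2k-3)-e_i\bigr)$ is nonnegative, so the claimed bound $\sum e_i^3 < (2k-2)^3 - 6k$ is equivalent to
\begin{equation*}
(2k-2)^2 + \sum_{i=1}^n e_i^2\bigl((2k-3)-e_i\bigr) > 6k.
\end{equation*}

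For $k \ge 4$ this is immediate: one checks $(2k-2)^2 > 6k$ (equivalently $2k^2 - 7k + 2 > 0$, valid for $k \ge 4$), and the remaining sum is nonnegative. The whole difficulty is therefore concentrated in the case $k=3$, which I expect to be the main obstacle, since there the crude term $(2k-2)^2 = 16$ falls just short of $6k = 18$, so the max-degree bound alone does not suffice and one must use integrality and both moment constraints. For $k=3$ the required inequality becomes $\sum_i e_i^2(3-e_i) \ge 3$ (the left side being an integer). Here I would argue directly from the integer values $e \le 3$: the term $e^2(3-e)$ equals $0$ for $e \in \{0,3\}$, equals $2$ for $e=1$, and is at least $4$ when $e=2$ or $e \le -1$. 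Hence if some $e_i \notin \{0,1,3\}$ the sum already exceeds $2$, and it only remains to rule out degree sequences with all $e_i \in \{0,1,3\}$: writing $p$ and $q$ for the numbers of $1$'s and $3$'s, the constraints $\sum e_i = 4$ and $\sum e_i^2 = 16$ read $p+3q=4$ and $p+9q=16$, forcing $q=2$ and $p=-2<0$, a contradiction. Thus for $k=3$ every admissible degree sequence has some $e_i$ equal to $2$ or at most $-1$, giving $\sum_i e_i^2(3-e_i) \ge 4 > 2$ and completing the proof.
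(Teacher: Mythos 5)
Your proof is correct, and its skeleton coincides with the paper's: both reduce via Lemma \ref{maxdegree} to $e_i=d_i-2\le 2k-3$, both compare $\sum_i e_i^3$ against $(2k-3)\sum_i e_i^2=(2k-2)^3-(2k-2)^2$, both dispose of $k\ge 4$ by checking $(2k-2)^2>6k$, and both isolate $k=3$ as the delicate case. Where you genuinely diverge is the $k=3$ endgame. The paper stays with the second moment alone: since $\sum_i e_i^2=16<2\cdot 3^2$, at most one $e_i$ can equal $3$, and then $\sum_i e_i^3\le 3^3+2\bigl(16-3^2\bigr)=41<46$. You instead rewrite everything through the exact identity with the nonnegative deficiency $D=\sum_i e_i^2\bigl((2k-3)-e_i\bigr)$, reduce the claim to $D\ge 3$ by integrality, and rule out the only potentially bad degree sequences (all $e_i\in\{0,1,3\}$) by bringing in a constraint the paper never uses in this lemma, namely the first moment $\sum_i e_i=2k-2$ (which you correctly derive from the vertex and edge counts of Lemma \ref{laplacedeterminesnumbers}); solving $p+3q=4$, $p+9q=16$ gives $p=-2<0$, a contradiction, so some term of $D$ is at least $4$. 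The paper's finish is shorter and needs one fewer spectral invariant; your identity-based framing makes the slack in the inequality completely explicit (showing exactly which hypothetical degree sequences would be tight and why they cannot occur), at the modest cost of invoking the first-moment constraint and an integrality argument.
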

 \begin{proof}By Lemma \ref{maxdegree}, we may assume that $d_i \leq 2k-1$ for $i=1,2,\dots,n$. Using \eqref{sumsdegrees}, it follows that $\sum_{i=1}^n(d_i-2)^3 \leq (2k-3)\sum_{i=1}^n(d_i-2)^2=(2k-3)(2k-2)^2$. For $k>3$, the required statement now follows easily. For $k=3$, the argument is a bit more technical. In this case $\sum_{i=1}^n(d_i-2)^2=16$, which implies that $d_i-2=3=2k-3$ for at most one value of $i$. Now it follows that $\sum_{i=1}^n(d_i-2)^3 \leq 3^3 +2 (\sum_{i=1}^n(d_i-2)^2 -3^2)=41<46= (2k-2)^3-6k$.
 \end{proof}

\begin{theorem}\label{cospctral} For $k\geq 3$, every $k$-rose graph is determined by its Laplacian spectrum.
\end{theorem}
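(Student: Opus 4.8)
The plan is to show that any graph $\G'$ which is Laplacian cospectral with a given $k$-rose graph $\G$ (with $k\geq 3$) must itself be a $k$-rose graph; once that is established, Proposition~\ref{genLap} applied to the Laplacian matrix $Q(1,-1)$ immediately yields that $\G$ and $\G'$ are isomorphic. So the entire burden is to rule out a non-rose cospectral mate, and the natural tool is the degree-power inequality of Lemma~\ref{inequality} together with the triangle information of Lemma~\ref{laplacetriangles}.

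The first step I would take is to record that $\sum_{i=1}^n (d_i-2)^3 - 6t$ is determined by the Laplacian spectrum, where $t$ is the number of triangles. This follows by expanding $(d_i-2)^3=d_i^3-6d_i^2+12d_i-8$ and summing: the term $\sum_i d_i^3-6t$ is spectrum-determined by Lemma~\ref{laplacetriangles}(ii), while $\sum_i d_i^2$, $\sum_i d_i$, and $n$ are spectrum-determined by Lemmas~\ref{laplacedeterminesnumbers} and~\ref{laplacetriangles}(i). Hence the whole combination is an isomorphism invariant that is read off from the spectrum, so it takes the same value on $\G$ and on $\G'$.

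Next I would compare the two graphs directly. Writing $t$ and $t'$ for the numbers of triangles in $\G$ and $\G'$, equality of the invariant above gives $\sum_{\G'}(d_i-2)^3=\sum_{\G}(d_i-2)^3-6t+6t'$. For the $k$-rose graph $\G$ only the central vertex contributes, so $\sum_{\G}(d_i-2)^3=(2k-2)^3$; moreover every triangle of a rose graph is one of its (at most $k$) triangular cycles, so $t\leq k$. Since trivially $t'\geq 0$, I obtain $\sum_{\G'}(d_i-2)^3\geq (2k-2)^3-6k$. If $\G'$ were not a rose graph, Lemma~\ref{inequality} would force the strict reverse inequality $\sum_{\G'}(d_i-2)^3<(2k-2)^3-6k$, a contradiction. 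Therefore $\G'$ is a rose graph, and, having by Lemma~\ref{laplacedeterminesnumbers} the same numbers of vertices and edges as $\G$ (equivalently, by \eqref{sumsdegrees}, maximum degree $2k$), it is in fact a $k$-rose graph; Proposition~\ref{genLap} then completes the proof.

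The one point that I expect to be the crux is the triangle bookkeeping. The spectrum does not separately determine $\sum_i d_i^3$ and $t$, only their combination $\sum_i d_i^3-6t$, so two Laplacian cospectral graphs may genuinely differ in their triangle counts, and one cannot simply equate the cubic degree sums. The estimate survives this only because the term $-6k$ appearing in Lemma~\ref{inequality} is exactly calibrated to absorb the extreme case $t\leq k$ and $t'\geq 0$; recognizing that this margin is precisely what is needed, rather than something stronger, is really the heart of the argument.
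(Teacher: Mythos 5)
Your proposal is correct and is essentially the paper's own proof: both use the spectral invariant $\sum_i(d_i-2)^3-6t$ (from Lemmas \ref{laplacedeterminesnumbers} and \ref{laplacetriangles}), the bound $t\leq k$ for triangles in a $k$-rose graph, and Lemma \ref{inequality} to force a contradiction unless the cospectral mate is a rose graph, after which Proposition \ref{genLap} finishes. The only difference is cosmetic ordering --- the paper invokes Proposition \ref{genLap} at the start to assume the mate is not a rose graph and then contradicts, while you rule out a non-rose mate first and invoke the proposition last.
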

\begin{proof}Fix $k$ and suppose, on the contrary, that $\G$ is a graph with the same Laplacian spectrum as a given $k$-rose graph $\G'$, but that $\G$ and $\G'$ are not isomorphic. By Proposition \ref{genLap}, we may assume that $\G$ is not a rose graph.
As before, we denote the vertex degrees of $\G$ by $d_i$ for $i=1,2,\dots,n$. Furthermore, we let $t$ and $t'$ be the number of triangles in $\G$ and $\G'$, respectively. From Lemmas \ref{laplacedeterminesnumbers} and \ref{laplacetriangles}, it follows that
$\sum_{i=1}^n (d_i-2)^3 -6t=(2k-2)^3-6t'$. However, from Lemma \ref{inequality}, it follows that
 $$\sum_{i=1}^n (d_i-2)^3-6t< (2k-2)^3-6(k+t).$$ Therefore $t'>t+k$, but this is clearly impossible, because the number of triangles $t'$ in a $k$-rose graph can be at most $k$. Thus, we have a contradiction, which finishes the proof.
\end{proof}

\section{The lemniscate graphs or $2$-rose graphs}\label{sec:case k=2}

What remains are the $2$-rose graphs, also known as $\infty$-graphs (lemniscate graphs). Wang, Huang, Belardo, and Li Marzi \cite{J. F. Wang} already showed that the triangle-free $2$-rose graphs are determined by the Laplacian spectrum. Here we will show that all $2$-rose graphs, except for $R(3,4)$ and $R(3,5)$, are determined by the Laplacian spectrum, and that for both of these exceptions there is one Laplacian cospectral mate. Together with Theorem \ref{cospctral} this shows Theorem \ref{thm:general}.

\begin{theorem}\label{thm:2rose} All $2$-rose graphs, except for $R(3,4)$ and $R(3,5)$, are determined by the Laplacian spectrum. Both $R(3,4)$ and $R(3,5)$ have one Laplacian cospectral mate.
\end{theorem}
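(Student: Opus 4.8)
The plan is to prove Theorem~\ref{thm:2rose} by first establishing the analogue of Lemma~\ref{maxdegree} for the case $k=2$, then dealing with the graphs $\G$ that are cospectral with a $2$-rose graph but are \emph{not} themselves $2$-rose graphs. The overall strategy mirrors the proof of Theorem~\ref{cospctral}: if a cospectral mate $\G$ of a $2$-rose graph happens to be a $2$-rose graph itself, then Proposition~\ref{genLap} (with $(\alpha,\beta)=(1,-1)$) immediately forces it to be isomorphic to the original. So the entire content of the theorem lies in ruling out, or precisely enumerating, the non-rose cospectral mates. The difficulty, compared with $k\geq 3$, is that Lemma~\ref{inequality} fails: for $k=2$ the bound $\sum_i(d_i-2)^3 \le (2k-3)(2k-2)^2 = 1\cdot 4 = 4$ against $(2k-2)^3-6k = 8-12 = -4$ does not give a contradiction, and indeed genuine cospectral mates $R(3,4)$ and $R(3,5)$ exist.

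First I would pin down the degree sequence of any non-rose cospectral mate $\G$. From \eqref{sumsdegrees} with $k=2$ we have $\sum_{i=1}^n(d_i-2)^2=4$, so the degrees deviate from $2$ only slightly; since $\G$ is connected with $n$ vertices and $n$ edges (by Lemma~\ref{laplacedeterminesnumbers}, a $2$-rose graph has a unique independent cycle-count and the cyclomatic number is $2$), the only possibilities are a vertex of degree $4$ with all others of degree $2$ (the rose case, excluded), or exactly one vertex of degree $3$ together with one more vertex of degree $3$ and the rest degree~$2$, i.e.\ $\sum(d_i-2)^2 = 1+1+1+1 = 4$ realized by \emph{four} vertices of degree~$3$, or by \emph{two} vertices of degree~$3$. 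Working this out carefully: the non-rose bicyclic connected graphs on $n$ vertices with $n$ edges and $\sum(d_i-2)^2=4$ are exactly the ``theta-type'' and ``dumbbell-type'' graphs (two cycles joined by a path, or two cycles sharing a path), which have two vertices of degree~$3$. I would enumerate these candidate shapes explicitly.

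Next, for each candidate shape I would use the additional spectral invariants to cut down the parameters. The numbers of triangles and of $4$-cycles are controlled: Lemma~\ref{laplacetriangles}(ii) fixes $\sum_i d_i^3-6t$, and Lemma~\ref{laplacequadrangles} fixes the fourth-order expression involving $\sum d_i^4$, $\sum_{j\sim i}d_id_j$, $\sum d_it_i$, $t$, and $f$. Since the original $2$-rose graph $R(p,q)$ has a completely determined right-hand side for all these invariants, matching them against the theta/dumbbell candidates yields Diophantine constraints on the cycle lengths. I expect these constraints to force the comparison down to the small cases where a triangle is involved, because the discrepancy in the cubic invariant is exactly absorbed by triangles ($t'>t$), which is only possible when the rose graph has a triangle, i.e.\ $R(3,\ell)$. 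Then matching the full Laplacian spectrum (not merely these trace invariants) for the finitely many surviving small shapes isolates $R(3,4)$ and $R(3,5)$ and exhibits their cospectral mates shown in Figures~\ref{fig:cosp34} and~\ref{fig:cosp35}.

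The main obstacle will be the bookkeeping in the intermediate range: after the degree sequence is fixed, one must verify that for \emph{all} cycle-length choices in the theta/dumbbell families \emph{except} the two exceptional ones, some spectral invariant is violated, and this requires either a clean monotonicity/inequality argument (analogous to Lemma~\ref{inequality} but now using the fourth moment from Lemma~\ref{laplacequadrangles} to get a strict inequality) or an exhaustive but finite check once the cycle lengths are bounded. I would try first to derive an inequality of the form ``if $\G$ is a non-rose cospectral mate then it must contain a triangle and a specific short cycle,'' reducing the problem to a finite list, and only then verify the two surviving coincidences and the non-existence of others by direct computation of characteristic polynomials. Establishing that the triangle-containing constraint bounds the second cycle length $\ell$ (so that the search is finite) is the crux; once $\ell$ is bounded, the remaining verification is routine.
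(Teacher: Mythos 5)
There is a genuine gap, and it occurs at the very first concrete step: the determination of the degree sequence of a non-rose cospectral mate $\G$. A $2$-rose graph on $n$ vertices has $n+1$ edges (not $n$, as you write), so Lemma \ref{laplacedeterminesnumbers} gives $\sum_{i=1}^n(d_i-2)=2$, while \eqref{sumsdegrees} with $k=2$ gives $\sum_{i=1}^n(d_i-2)^2=4$. These two constraints together have exactly two solutions: one vertex of degree $4$ and the rest of degree $2$ (the rose case, excluded via Proposition \ref{genLap}), or three vertices of degree $3$, one vertex of degree $1$, and the rest of degree $2$. Your enumeration satisfies one constraint but violates the other in each case: four vertices of degree $3$ gives $\sum(d_i-2)=4\neq 2$, and two vertices of degree $3$ gives $\sum(d_i-2)^2=2\neq 4$. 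In particular, the theta-type and dumbbell-type graphs you propose to enumerate are killed immediately by the second moment of the degree sequence and can never be Laplacian cospectral with a $2$-rose graph. Consequently, your subsequent plan analyzes an empty family and could never produce the actual exceptional mates: both graphs in Figures \ref{fig:cosp34} and \ref{fig:cosp35} contain a pendant vertex, a feature entirely absent from your candidate shapes.

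For comparison, once the correct degree sequence $3,3,3,2,\dots,2,1$ is in hand, the paper's argument is short. Lemma \ref{laplacetriangles} gives $t=t'-1$, which forces $\G'$ to contain a triangle (so $\G'=R(3,n-2)$, and $R(3,3)$ is handled separately since no graph with five vertices, six edges, and one triangle exists). Then Lemma \ref{laplacequadrangles}, combined with a count of edges between degree classes, yields $e_{33}-e_{13}+2f=5+2\delta_{n,6}$ for the number $f$ of $4$-cycles of the triangle-free bicyclic graph $\G$, whence $2\leq f\leq 3$. Finally, the number of spanning trees ($12$ when $f=3$, i.e.\ $K_{2,3}\subseteq \G$; $15$ or $16$ when $f=2$) is matched against $3(n-2)$ to pin down $n=6$ and $n=7$, producing exactly the two mates of $R(3,4)$ and $R(3,5)$. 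Your instinct that the triangle discrepancy forces $\G'$ to contain a triangle, and that the problem must be reduced to a finite check, is in the right direction; but without the pendant vertex in the picture, and without an invariant such as the spanning-tree count to bound $n$, the reduction you sketch cannot be carried out.
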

\begin{proof}
Let $\G'$ be a $2$-rose graph on $n$ vertices, and let $\G$ be a graph with the same Laplacian spectrum as $\G'$, but that is not isomorphic to $\G'$. From Proposition \ref{genLap}, it follows that that $\G$ is not a $2$-rose graph. It is clear from Lemma \ref{laplacedeterminesnumbers} that $\G$ is connected with $n$ vertices and $n+1$ edges, so it is bicyclic. As before, we denote the vertex degrees of $\G$ by $d_i$ for $i=1,2,\dots,n$, and we let $t$ and $t'$ be the number of triangles in $\G$ and $\G'$. It follows that $\sum_{i=1}^n (d_i-2)=2$, which together with \eqref{sumsdegrees} gives that the degree sequence of $\G$ must be  $3$ ($3$ times), $2$ ($n-4$ times), and $1$ (once). From Lemma \ref{laplacetriangles}, it now follows that $t=t'-1$.
Hence, if $\G'$ is triangle-free, then $t=-1$, which is clearly impossible. Indeed, this is (roughly speaking) the same argument as used by Wang, Huang, Belardo, and Li Marzi \cite{J. F. Wang} to prove that the triangle-free $2$-rose graphs are determined by the Laplacian spectrum.

If $\G'=R(3,3)$, then $t=1$. But there is no graph with five vertices, six edges, and one triangle. Thus, also $R(3,3)$ is determined by the Laplacian spectrum.

What remains are the graphs $\G'=R(3,n-2)$, with $n>5$. Such graphs have one triangle, so $\G$ is triangle-free. Let $f$ be the number of $4$-cycles in $\G$. We now intend to show that $f \geq 2$.
A straightforward calculation and applying Lemma \ref{laplacequadrangles} gives that
\begin{equation}\label{didj1}
180+16n + 2\sum_{i=1}^n\sum_{j \sim i}d_id_j +8f = 280+32n+8\delta_{n,6},
\end{equation}
where $\delta_{n,6}$ is the Kronecker delta indicating whether $n=6$ or not (that is, whether there is a $4$-cycle in $R(3,n-2)$ or not).
Now let $e_{33}$ be the number of edges between vertices of degree $3$ and let $e_{13}$ be the number of neighbors with degree $3$ of the vertex of degree $1$.
Then it follows that
\begin{equation}\label{didj2}
\sum_{i=1}^n\sum_{j \sim i}d_id_j =40+8n+2(e_{33}-e_{13}).
\end{equation}
Indeed, this can be derived in the following way. Let $V_h$ be the set of vertices of degree $h$, for $h=1,2,3$. Partition the adjacency matrix $A$ accordingly into blocks $A_{h\ell}$ for $h,\ell=1,2,3$, and let $a_{h\ell}$ be the sum of all entries of $A_{h\ell}$. It is clear that $\sum_{i=1}^n\sum_{j \sim i}d_id_j=\sum_{h,\ell}h\ell a_{h\ell}$ and $a_{h\ell}=a_{\ell h}$ for all $h,\ell=1,2,3$. Moreover, the following equations hold: $a_{11}+a_{12}+a_{13}=1$, $a_{21}+a_{22}+a_{23}=2(n-4)$, and $a_{31}+a_{32}+a_{33}=9$. Finally, note that $a_{11}=0,a_{13}=e_{13}$, and $a_{33}=2e_{33}$. By using all of this, we obtain \eqref{didj2}.

Putting \eqref{didj1} and \eqref{didj2} together, we now find that
\begin{equation}\label{eef}
e_{33}-e_{13}+2f=5+2\delta_{n,6}.
\end{equation}
We also know that $e_{33} \leq 2$ because $\G$ is triangle-free, so \eqref{eef} implies that $f \geq 2$. But $\G$ is bicyclic, so we also have that $f \leq 3$.

If $f=3$, then $\G$ has the complete bipartite graph $K_{2,3}$ as an induced subgraph. It implies that the number of spanning trees of $\G$ equals $12$. On the other hand, the number of spanning trees of $\G'$ equals $3(n-2)$, so $n=6$ by Lemma \ref{laplacedeterminesnumbers}. Given that $\G$ has $K_{2,3}$ as an induced subgraph, the vertex of degree $1$ must be adjacent to a vertex of degree $3$ and we obtain the graph on the right in Figure \ref{fig:cosp34}, which is indeed cospectral to $R(3,4)$.

Finally, we let $f=2$. If the two $4$-cycles do not share an edge, then the number of spanning trees of $\G$ equals $16$, but this cannot equal $3(n-2)$, the number of spanning trees of $R(3,n-2)$. So the two $4$-cycles share one edge (note that if they would share two edges, then $f=3$). Now the number of spanning trees of $\G$ equals $15$, which implies that $n=7$. Clearly, we now get the graph on the right in Figure \ref{fig:cosp35}, which is indeed cospectral to $R(3,5)$.
\end{proof}
\section*{Acknowledgement}

 This
work was completed while the first author was visiting the Department of Econometrics and Operations Research of  Tilburg University, for which support from the China Scholarship Council is gratefully acknowledged. The first author would like to thank Shanghai key laboratory of contemporary optics system for the support received during 2016.


\begin{thebibliography}{10}

\bibitem{BP} F. Belardo and P. Petecki, Spectral characterizations of signed
lollipop graphs, {\sl Linear Algebra Appl.}
    480 (2015), 144--167.

\bibitem{BrHa} A.E. Brouwer and W.H. Haemers, {\sl Spectra of Graphs}, Springer, New York, 2012;
    \url{http://homepages.cwi.nl/~aeb/math/ipm/}.

\bibitem{CDS} D. Cvetkovi\'c, M. Doob, and H. Sachs, {\sl Spectra of Graphs},
    Academic Press, New York, 1980.

\bibitem{CRS} D. Cvetkovi\'c, P. Rowlinson, and S.K. Simi\'c, Signless Laplacians of finite graphs, {\sl Linear Algebra Appl.}
    423 (2007), 155--171.

\bibitem{EvDthesis} E.R. van Dam, {\sl Graphs with Few Eigenvalues - An Interplay between Combinatorics and Algebra}, thesis, Tilburg University, 1996; \url{https://pure.uvt.nl/portal/files/193719/Graphswi.pdf}.

\bibitem{DH03} E.R. van Dam and W.H. Haemers, Which graphs are determined by their spectrum?, {\sl Linear Algebra Appl.}
    373 (2003), 241--272.

\bibitem{DH09} E.R. van Dam and W.H. Haemers, Developments on spectral characterizations of graphs, {\sl Discrete Math.}
    309 (2009), 576--586.

\bibitem{F. J. Liu}
F.J. Liu and Q.X. Huang, Laplacian spectral characterization of $3$-rose graphs, {\sl Linear Algebra Appl.} 439 (2013), 2914--2920.

\bibitem{X. Liu}
X. Liu, Y. Zhang, and X. Gui, The multi-fan graphs are determined by their Laplacian spectra, {\sl Discrete Math.} 308 (2008), 4267--4271.

\bibitem{MaHuang} X. Ma and Q.X. Huang, Signless Laplacian spectral characterization of $4$-rose graphs, {\sl Linear Multilinear Algebra} (to appear), \href{http://dx.doi.org/10.1080/03081087.2016.1161705}{DOI:10.1080/03081087.2016.1161705}.


\bibitem{Sachs} H. Sachs, Beziehungen zwischen den in einem Graphen enthaltenen Kreisen und seinem charakteristischen Polynom, {\sl Publ. Math. Debrecen} 11 (1964), 119--134.

\bibitem{wang3}
J.F. Wang, Q.X. Huang, and F. Belardo, On the spectral characterizations of $3$-rose graphs, {\sl Util. Math.} 91 (2013), 33--46.

\bibitem{J. F. Wang}
J.F. Wang, Q.X. Huang, F. Belardo, and E.M. Li Marzi, On the spectral characterizations of $\infty$-graphs, {\sl Discrete Math.} 310 (2010), 1845--1855.

\bibitem{WeiWang}
W. Wang, F. Li, H. Lu, and Z. Xu, Graphs determined by their generalized characteristic
polynomials, {\sl Linear Algebra Appl.} 434 (2011), 1378--1387.

\end{thebibliography}
\end{document}